\newcounter{itemcounter}
\numberwithin{itemcounter}{section}
\newtheorem{thm}[itemcounter]{Theorem}
\newtheorem{prop}[itemcounter]{Proposition}
\newtheorem{cor}[itemcounter]{Corollary}
\newtheorem{rem}[itemcounter]{Remark}
\newtheorem*{thm*}{Theorem}
\newtheorem*{con*}{Conjecture}
\newtheorem*{cor*}{Corollary}
\newtheorem*{ack*}{Acknowledgements}
\newcommand{\Sym}{\mathop{\rm Sym}\nolimits}
\newcommand{\Syl}{\mathop{\rm Syl}\nolimits}
\newcommand{\Ker}{\mathop{\rm Ker}\nolimits}
\newcommand{\Hom}{\mathop{\rm Hom}\nolimits}
\newcommand{\sHom}{\mathop{\rm \underline{Hom}}\nolimits}
\newcommand{\Aut}{\mathop{\rm Aut}\nolimits}
\newcommand{\Ext}{\mathop{\rm Ext}\nolimits}
\newcommand{\Mod}{\mathop{\rm Mod}\nolimits}
\newcommand{\sMod}{\mathop{\rm \underline{Mod}}\nolimits}
\newcommand{\Out}{\mathop{\rm Out}\nolimits}
\newcommand{\Pic}{\mathop{\rm Pic}\nolimits}
\newcommand{\sPic}{\mathop{\rm \underline{Pic}}\nolimits}
\newcommand{\cT} {\mathcal{T}}
\newcommand{\cL} {\mathcal{L}}
\newcommand{\cE} {\mathcal{E}}
\newcommand{\cO} {\mathcal{O}}
\title{Picard groups for some blocks with TI defect groups}
\date{}
\author{Claudio Marchi\footnote{Department of Mathematics, School of Natural Sciences, University of Manchester, Manchester, M13 9PL, United Kingdom. Email: claudio.marchi@manchester.ac.uk}}
\begin{document}
\maketitle
\begin{abstract}
We calculate the Picard groups for principal blocks $B$ with TI defect groups and cyclic inertial quotient. The methods used generalize results in \cite{carlson2000self} on self stable equivalences and take advantage of the existence of equivalences given by Green correspondence in this setting. In particular, we show that $\Pic(B)=\cE(B)$, giving more evidence for a conjecture on basic auto-Morita equivalences.
\end{abstract}
\section{Introduction} 
Let $(K,\cO,k)$ be a $p$-modular system, meaning that $k$ is an algebraic closure of the field with $p$ elements, $\cO$ is a complete discrete valuation ring, $\cO/J(\cO)\simeq k$ and $K$ is the field of fractions of $\cO$, with $\text{char}(K)=0$. We also assume that $K$ is a splitting field for all the groups involved in this paper. By a block, we will always mean a block algebra of $\cO G$.

The Picard group of a block, i.e. the group of auto-Morita equivalences of a block, has recently been object of many investigations, revealing itself a particularly interesting Morita invariant. One of the main questions regarding Picard groups of blocks is about $\cE(B)$, the subgroup of $\Pic(B)$ given by bimodules with endo-permutation source as $\cO(G\times G)$-modules. As reported in \cite{eisele2020picard}, it is in fact an open question whether this subgroup is proper. We state it as a conjecture here.
\begin{con*}
Let $B$ be a block of a finite group $G$. Then $\Pic(B)=\cE(B)$.
\end{con*}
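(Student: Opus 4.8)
\emph{Remark on scope.} In full generality this statement is the open conjecture recorded above, so I can only propose a strategy for the class of blocks actually treated here: principal blocks $B = B_0(\cO G)$ whose defect group $P$ is TI and whose inertial quotient $E$ is cyclic. The organizing idea is to pass from Morita self-equivalences to \emph{stable} self-equivalences, to compare $B$ with its Brauer correspondent in $N_G(P)$ via the Green correspondence (which behaves especially well for TI defect), and then to exploit that the correspondent block has \emph{normal} defect group, where every self-equivalence visibly has endo-permutation source.

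The plan is as follows. First I would reduce the problem to stable equivalences: since the inclusion $\cE(B) \subseteq \Pic(B)$ is automatic, it suffices to show that every $M \in \Pic(B)$ has endo-permutation source. Every such $M$ restricts to a self-stable equivalence of Morita type of $B$, so the first step is to control the group of these, generalizing the analysis of self stable equivalences in \cite{carlson2000self}. The TI hypothesis is what makes this feasible: it forces the fusion of $P$ in $G$ to be controlled at the single subgroup $P$, so the local data governing stable self-equivalences is concentrated at $N_G(P)$.

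Next I would bring in the Green correspondence. For a TI defect group, Green correspondence induces a stable equivalence of Morita type $\mathsf{F}$ between $B$ and $b := B_0(\cO N_G(P))$ — these are the ``equivalences given by Green correspondence'' advertised in the abstract. Given $M \in \Pic(B)$, I would transport it to the self-stable equivalence $\mathsf{F} \circ M \circ \mathsf{F}^{-1}$ of $b$, keeping careful track of sources, since $\mathsf{F}$ alters modules only by projective (trivial source) summands and hence preserves the endo-permutation property of sources. Because $P \trianglelefteq N_G(P)$, the block $b$ has normal defect group, so by Külshammer's theorem it is Morita equivalent to a twisted group algebra of $P \rtimes E$; with $E$ cyclic one can compute $\Pic(b)$ explicitly and verify directly that $\Pic(b) = \cE(b)$. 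Descending the conclusion back through $\mathsf{F}$ then identifies $M$ with an element of $\cE(b)$ and, by source-tracking, places $M$ in $\cE(B)$, giving $\Pic(B) = \cE(B)$.

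The hard part will be the source-tracking across the \emph{merely stable} equivalence $\mathsf{F}$ in the two transport steps. One must show both that transporting a genuine self-Morita equivalence along $\mathsf{F}$ preserves the endo-permutation source and that a self-equivalence which is Morita ``upstairs'' on $b$ remains detectably Morita, rather than only stable, ``downstairs'' on $B$; equivalently, one must rule out exotic self-equivalences lying outside $\cE(B)$ that the stable nature of the Green correspondence might otherwise admit. This is precisely the point at which the Carlson--Rouquier self-stable-equivalence machinery has to be pushed beyond its original setting, and I expect it to be the main obstacle.
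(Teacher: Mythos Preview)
Your broad architecture matches the paper's---transport along the trivial-source stable equivalence $\mathsf{F}$ between $B$ and $b=\cO H$, $H=N_G(P)$, and analyse $b$---and you correctly flag the obstacle: $\mathsf{F}$ sends $M\in\Pic(B)$ only into $\sPic(\cO H)$, so knowing $\Pic(b)=\cE(b)$ does not suffice, and ``source-tracking'' across a merely stable functor is not a mechanism on its own. The paper's resolution, which your plan is missing, has two concrete ingredients. The Carlson--Rouquier generalisation is driven not by cyclicity of $E$ alone but by the hypothesis that $T(P)\cong\ZZ$ is generated by $\Omega(k)$, together with non-periodic cohomology of $P$; under these one proves $\sPic(\cO H)=\Pic(\cO H)\cdot\langle\Omega\,\cO H\rangle$ with the Heller factor an \emph{infinite} central cyclic subgroup. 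One then invokes the (now known) \emph{finiteness} of $\Pic(B)$: its image under conjugation by $\mathsf{F}$ is a finite subgroup of $\sPic(\cO H)$ and must therefore avoid the torsion-free $\Omega$-part, landing entirely in $\Pic(\cO H)$. This finiteness-versus-torsion-free argument is precisely what upgrades the stable comparison to a genuine Morita one, and it is absent from your outline.

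A second, smaller gap: the paper does not cite a general ``$\Pic(b)=\cE(b)$ for normal defect'' result---that question is not yet fully settled even in the normal case. Instead, for each $H$ arising from the classification of groups with TI Sylow $p$-subgroups, it proves the stronger statement $\Pic(\cO H)=\cT(\cO H)$ via the character-theoretic criterion of Eaton--Livesey (comparing lifts of Brauer characters with characters having $P$ in their kernel). Since $\mathsf{F}$ is realised by trivial-source bimodules, this immediately yields $\Pic(B)=\cT(B)\subseteq\cE(B)$, with no delicate source bookkeeping required.
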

This conjecture holds for nilpotent blocks and blocks with cyclic defect group by classical results in \cite{puig1988nilpotent} and \cite{linckelmann1996isomorphism}, and it has almost been settled for blocks with normal defect groups in \cite{livesey2021picard} and \cite{lima20}, but little is known for other classes of blocks. Principal blocks with (non-normal) trivial intersection defect groups, look like perfect candidates to test the conjecture, due to the existence of a stable equivalence of Morita type with a block with normal defect group. The main result of this paper gives a positive answer for some of these blocks. We recall, before stating it, that blocks with TI defect groups were completely classified in \cite{an2004blocks}, but information concerning principal blocks can be retrieved in \cite{blau1990modular}.
\begin{thm*}
Let $G$ be a group with (non-normal) T.I. Sylow $p$-subgroups. If the principal block $B$ has cyclic inertial quotient, then $\Pic(B)=\cE(B)$.
\end{thm*}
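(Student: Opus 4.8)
The plan is to transport the computation of $\Pic(B)$ to the Brauer correspondent, where the defect group is normal and the Picard group is understood, and then to control sources so that every auto-Morita equivalence of $B$ is forced to have endo-permutation source. Fix $P \in \Syl_p(G)$, a non-normal TI Sylow $p$-subgroup, set $N = N_G(P)$, and let $b$ be the principal block of $\cO N$. Then $b$ has normal defect group $P$ and the same cyclic inertial quotient as $B$, so the results on blocks with normal defect groups from \cite{livesey2021picard} and \cite{lima20} give $\Pic(b) = \cE(b)$ in the case of cyclic inertial quotient relevant here.

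First I would record the consequences of the TI hypothesis. Because $P$ is TI, Green correspondence induces a stable equivalence of Morita type $\Phi \colon B\text{-mod} \to b\text{-mod}$, realized by an explicit bimodule $X$ with vertex the diagonal $\Delta P$ and endo-permutation source; this is exactly the situation in which the techniques of \cite{carlson2000self} on self stable equivalences apply. The key structural input is that, in the TI setting, the non-projective part of every module is controlled by $N$, so that $\Phi$ and $\Phi^{-1}$ alter vertices and sources only by tensoring with the (endo-permutation) source of $X$ and its dual.

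Next, given $[M] \in \Pic(B)$, the bimodule $M$ is indecomposable with vertex $\Delta P$ and some $\cO\Delta P$-source $V$; the whole point is to prove that $V$ is an endo-permutation module. Conjugating the self-equivalence induced by $M$ through $\Phi$ yields a self-stable equivalence of Morita type of $b$. Using the normal defect group structure of $b$ together with the classification of self stable equivalences, I would show that this self-stable equivalence is induced by a genuine element $[M'] \in \Pic(b) = \cE(b)$; in particular $M'$ has endo-permutation source $V'$. Unwinding the construction, $M'$ agrees up to projectives with $X^\ast \otimes_B M \otimes_B X$, so that on the vertex $\Delta P$ the sources $V$ and $V'$ differ only by tensoring with the source of $X$ and its dual. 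Since endo-permutation modules are closed under tensor products and duals, $V'$ being endo-permutation forces $V$ to be endo-permutation as well. Hence $\Pic(B) \subseteq \cE(B)$, and as the reverse inclusion is automatic we conclude $\Pic(B) = \cE(B)$.

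The main obstacle is the lifting step: showing that the self-stable equivalence of $b$ coming from an element of $\Pic(B)$ is actually induced by a Morita self-equivalence of $b$, and not merely a stable one, while simultaneously keeping track of sources. This is where the generalization of \cite{carlson2000self} is essential, and where the cyclic inertial quotient hypothesis is used to keep the local structure, and hence the group of self stable equivalences, small enough to be analyzed completely.
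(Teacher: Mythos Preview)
Your overall strategy---transport through the Green-correspondence stable equivalence to the Brauer correspondent $b$ of $\cO N_G(P)$, control the stable Picard group there, and read off sources---is exactly the skeleton the paper uses. But the step you flag as the ``main obstacle'' is a genuine gap, not just missing detail. The description of $\sPic(\cO H)$ via the Carlson--Rouquier generalization (the paper's Corollary~\ref{stablepic}) requires the hypothesis $T(P)\simeq\ZZ$, generated by Heller shifts of the trivial module. This is \emph{not} a formal consequence of $P$ being a TI Sylow subgroup or of the inertial quotient being cyclic; it can fail for $p$-groups in general. The paper therefore cannot run a uniform argument: it first performs a classification reduction (dispose of cyclic and generalized quaternion $P$, reduce to $O_{p'}(G)=1$, obtain $S\le G\le\Aut(S)$ with $S\in\{PSL_2(q),\,PSU_3(q),\,{}^2B_2(q),\,{}^2G_2(q)\}$, and rule out most non-inner extensions by showing field automorphisms destroy cyclicity of the inertial quotient), and then verifies $T(P)\simeq\ZZ$ case by case using results of Carlson--Th\'evenaz, Carlson--Mazza--Nakano, and Carlson--Grodal--Mazza. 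Your proposal skips this entirely.

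There are two further issues. First, your appeal to \cite{livesey2021picard} and \cite{lima20} for $\Pic(b)=\cE(b)$ is not justified: those papers do not settle the normal-defect-group case in full (the introduction here says only ``almost''), and the paper does not use them for this step. Instead it proves the stronger $\Pic(\cO H)=\cT(\cO H)$ directly in each family via the character-theoretic criterion of Eaton--Livesey (the irreducible characters with $P$ in the kernel coincide with the lifts of the irreducible Brauer characters). Second, your lifting of a single stable self-equivalence of $b$ to a Morita one is not automatic: the paper's mechanism is that once $\sPic(\cO H)=\Pic(\cO H)\cdot\langle\Omega\rangle$ with $\langle\Omega\rangle$ infinite central (non-periodic cohomology), every \emph{finite} subgroup of $\sPic(\cO H)$---in particular the image of $\Pic(B)$, which is finite---must lie in $\Pic(\cO H)$. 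You do not invoke this finiteness, and without it an individual stable self-equivalence need not come from $\Pic(b)$.
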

The main tools used for proving this result are generalizations of the results of \cite[Section 3]{carlson2000self} and recent developments in the study of Picard groups. In particular, replacing the condition on the freeness of the action of the inertial quotient with an hypothesis on endo-trivial modules in the results of Carlson and Rouquier, allows the description of the stable Picard group for a much larger set of groups. Stable equivalences of Morita type are quite common in block theory, e.g. derived equivalent blocks are stable equivalent à la Morita, therefore we hope this kind of approach could be used in future to describe Picard groups whenever there is a stable equivalence of Morita type with a block with normal defect group, so for example when Broué's abelian defect group conjecture holds.

\section{Stable auto equivalences of Morita type}
Recall that the stable category of $\Mod(A)$, where $A$ is a $k$-algebra, is the category $\sMod(A)$ whose objects are the $A$ modules and, for any two $A$-modules $U,V$, morphisms are given by the quotients space
\[
\sHom_A(U,V)=\Hom_A(U,V)/\Hom_A^{pr}(U,V),
\]
where $\Hom_A^{pr}(U,V)$ are the $A$-homomorphisms that factor through a projective $A$-module. In this section we are going to completely describe the stable Picard group for some group algebras, $\sPic(kG)$, whose elements are the isomorphism classes in the stable category of $kG$-$kG$-bimodules inducing a stable auto-equivalence of Morita type of $kG$. In general it is not known how this group is related to the Picard group of $kG$, but we will show that, in some cases, $\sPic(kG)$ is generated by $\Pic(kG)$ and a cyclic, central, subgroup.

As we said in the introduction, the aim of this paper is calculating the Picard group for some principal blocks with TI defect groups. The choice of such blocks restricts our attention to groups $G$ with TI Sylow $p$-subgroups $P$ and, by \cite[Proposition 5.2.5]{linckelmann2018block}, there is a stable equivalence of Morita type given by induction and restriction, that gives
\[
\Psi:\sMod(B)\xrightarrow{\sim}\sMod\left(k(N_G(P))\right)
\] 
The stable Picard groups of $B$ and $kN_G(P)$ are then isomorphic, so this is the ultimate reason for being interested in the stable Picard group of $N_G(P)$. The following is an adaptation of \cite[Lemma 3.1]{carlson2000self} to a more general setting. We are going to make use of endo-trivial modules and $T(G)$ will always denote the group of endo-trivial modules. A complete account of the topic can be found in \cite{mazza2019endotrivial}.
\begin{prop}\label{carlsonrouquier}
Let $G=P\rtimes E$ with $E$ a cyclic $p'$-group, $P$ a $p$-group with non-periodic cohomology. Suppose that $T(P)\simeq \mathbb{Z}$ corresponds to the Heller translates of the trivial $kP$-module. If $F:\sMod(kG)\rightarrow \sMod(kG)$ is a stable equivalence of Morita type, then there is $\sigma\in \Sym(\Hom(E,k^{\times}))$ and an integer $n$ such that $\forall V\in \Hom(E,k^{\times})$(identifying the characters of $E$ with the underlying modules) we have $\Omega^{-n}F(V)\simeq \sigma(V)$ in $\sMod(kG)$.
\end{prop}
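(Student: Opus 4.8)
\emph{Proof strategy.} The plan is to follow the vertices and sources of the one-dimensional modules through $F$, and to extract the integer $n$ from the source of the bimodule inducing $F$. First I would identify the simple modules: as $P=O_p(G)$ acts trivially on every simple $kG$-module, the simples are inflated from $kE$, and since $E$ is a cyclic $p'$-group and $k$ is a splitting field they are exactly the one-dimensional characters, which is the identification of the simples with $\Hom(E,k^{\times})$ used in the statement. Every such $V$ restricts to the trivial $kP$-module, so it has vertex $P$ and trivial source; conversely, the indecomposable trivial-source modules with vertex $P$ are precisely the indecomposable summands of $\Ind_P^G k\simeq\bigoplus_{V}V$, that is, precisely the modules in $\Hom(E,k^{\times})$.

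Next I would write $F\simeq M\otimes_{kG}-$ for a bimodule $M$ inducing the stable equivalence of Morita type, so that $M\otimes_{kG}M^{\vee}\simeq kG\oplus(\mathrm{proj})$ as $(kG,kG)$-bimodules. Taking $M$ indecomposable and non-projective, it has diagonal vertex $\Delta P$ and an endo-permutation source $W$. The crucial point is that the source is multiplicative, so applying it to $M\otimes_{kG}M^{\vee}\simeq kG\oplus(\mathrm{proj})$ forces $W\otimes_k W^{*}\simeq k\oplus(\mathrm{proj})$; hence $W$ is an \emph{endo-trivial} $kP$-module. This is exactly where the hypotheses enter: the non-periodicity of the cohomology of $P$ makes $\Omega$ of infinite order, and $T(P)\simeq\mathbb{Z}$ being generated by the Heller translate of $k$ then forces $W\simeq\Omega^{n}(k)$ for a single, well-defined integer $n$.

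Finally, for each $V\in\Hom(E,k^{\times})$ the module $F(V)=M\otimes_{kG}V$ has, after deleting projective summands, vertex $P$ and source $W\otimes_k k\simeq\Omega^{n}(k)$, the same for every $V$. Since the Heller operator commutes with restriction, applying $\Omega^{-n}$ turns the source $\Omega^{n}(k)$ back into $k$, so $\Omega^{-n}F(V)$ has vertex $P$ and trivial source and therefore lies again in $\Hom(E,k^{\times})$ by the first step; I set $\sigma(V):=\Omega^{-n}F(V)$. As $F$ is an equivalence it is bijective on stable isomorphism classes of indecomposable non-projective modules, and $\Omega^{-n}$ together with the identification of trivial-source vertex-$P$ modules is a bijection, whence $\sigma\in\Sym(\Hom(E,k^{\times}))$ and $\Omega^{-n}F(V)\simeq\sigma(V)$ for all $V$, as required.

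I expect the main obstacle to be the source bookkeeping of the second step: that $M$ may be chosen with diagonal vertex and endo-permutation source, that this source is multiplicative under $\otimes_{kG}$ (so that $M\otimes_{kG}M^{\vee}$ pins down $W$ and $M\otimes_{kG}V$ has source $W\otimes(\text{source of }V)$), and that $W$ is consequently endo-trivial rather than merely endo-permutation. These facts rest on the Puig--Linckelmann theory of endo-permutation sources under stable equivalences of Morita type; granting them, the hypothesis $T(P)\simeq\mathbb{Z}$ does the remaining work, collapsing all possible sources to Heller translates of $k$ and thereby producing the single integer $n$ uniformly in $V$.
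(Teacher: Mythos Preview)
Your approach is genuinely different from the paper's, and the gap you yourself flag is real rather than merely technical. You assume that the indecomposable bimodule $M$ inducing $F$ has diagonal vertex $\Delta P$ and endo-permutation source, that sources multiply under $\otimes_{kG}$ so that the source of $M$ becomes endo-trivial, and that the same multiplicativity transports this source to every $M\otimes_{kG}V$. None of this is available in the literature for an \emph{arbitrary} stable self-equivalence of Morita type; indeed, knowing it for Morita self-equivalences of the Brauer correspondent is essentially the stable analogue of $\Pic=\cE$, which is precisely what the paper is working towards. So ``granting them'' is not innocuous: you would be assuming something at least as hard as the target statement, and the argument becomes circular in the context of this paper.

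The paper avoids bimodule source theory entirely. It argues module-by-module: each $V\in\Hom(E,k^\times)$ is endo-trivial, and $F(V)$ is again endo-trivial (this is the step that replaces your source analysis, and it follows from general facts about stable equivalences preserving invertibility under $\otimes_k$). Restriction to $P$ then lands in $T(P)\simeq\mathbb{Z}$, giving $F(V)\simeq\Omega^{n(V)}\sigma(V)$ with an \emph{a priori} $V$-dependent integer $n(V)$. The substantive remaining work, which your outline bypasses, is to show that $n(V)$ is in fact constant; this is done by the Carlson--Rouquier Poincar\'e-series argument using $h(V)=\sum_n\dim\Ext^n(k,V)\,t^n$ and a minimality trick, exactly as in \cite{carlson2000self}. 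That combinatorial step is where the single $n$ actually comes from, and it is what you would need to supply if you cannot justify the vertex/source claims for $M$.
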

\begin{proof}
Let $V\in\Hom(E,k^{\times})$, then $V$ is an endo-trivial $kG$-module, thus $F(V)$ is endo-trivial as well; this last fact immediately follows by \cite[Proposition 7.5.12]{linckelmann2018block} and associativity of tensor product. But then its restriction to $P$ is endo-trivial by \cite[Proposition 2.6]{carlson2006endotrivial}, thus there is an integer $n$ such that $F(V)$ is a direct summand of
\[
\text{Ind}_P^G(\Omega^n(k)+\text{proj})=\oplus_{U\in \hat{E}}\Omega^n_{kG}U+\text{proj}.
\]
So there are two functions, $n:\hat{E}\rightarrow \mathbb{Z}$, $\sigma:\hat{E}\rightarrow\hat{E}$, such that 
\[
F(V)\simeq \Omega^{n(V)}\sigma(V).
\]
We want to show that $\sigma$ is a permutation. Suppose $\sigma(V)=\sigma(V')$, then $V'=\Omega^{n(V)-n(V')}_{kG}(V')$, thus after restriction to $P$ we have $k=\Omega^{n(V)-n(V')}_{kP}(k)$, but this is a contradiction since $P$ is not periodic, so $\sigma$ is a permutation.

We can now define a bijection $\tau:\hat{E}\rightarrow\hat{E}$ by $\tau(V)=\sigma(VW)\sigma(W)^{-1}$. Take $W\in\hat{E}$ with $n(W)$ minimal and define $h:\hat{E}\rightarrow\mathbb{Z}[[t,t^{-1}]]$ by
\[
h(V)=\sum_{n\in\mathbb{Z}}\text{dim}(\Ext^n(k,V))t^n.
\]
Proceeding exactly as in \cite[Lemma 3.1]{carlson2000self} we get the desired result.
\end{proof}
\begin{cor}\label{stablepic}
Let $G$ be a finite group, $P$ a Sylow $p$-subgroup of $G$, $H:=N_G(P)\simeq P\rtimes E$ such that:
\begin{enumerate}[(i)]
\item $P$ has non-periodic cohomology.
\item $T(P)\simeq\mathbb{Z}$.
\item $E$ is a cyclic $p'$-group.
\end{enumerate} 
Then
\[
\underline{\textnormal{Pic}}(\mathcal{O}H)=\textnormal{Pic}(\mathcal{O}H)\cdot \langle \Omega_{\mathcal{O}(H\times H^0)}\mathcal{O}H\rangle.
\]
\end{cor}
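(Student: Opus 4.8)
The plan is to pass to the residue field $k$, use Proposition~\ref{carlsonrouquier} to strip off a power of the Heller operator, recognise what remains as an honest Morita self-equivalence, and then lift this statement back to $\cO$. The inclusion $\Pic(\cO H)\cdot\langle\Omega_{\cO(H\times H^0)}\cO H\rangle\subseteq\sPic(\cO H)$ is immediate, since Morita self-equivalences and the Heller translate of the diagonal bimodule all induce stable self-equivalences of Morita type. Moreover, tensoring with $\Omega_{\cO(H\times H^0)}\cO H$ realises the Heller shift on $\sMod(\cO H)$, which commutes with every stable self-equivalence of Morita type, so $\langle\Omega_{\cO(H\times H^0)}\cO H\rangle$ is central; in particular the right-hand side is a genuine subgroup and it suffices to prove the reverse inclusion.

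So let $M$ be a bimodule inducing a class in $\sPic(\cO H)$, and let $\bar M=k\otimes_\cO M$ be its reduction, which induces a stable self-equivalence of Morita type $F$ of $kH$. First I would observe that, because $P=O_p(H)$ is normal, the simple $kH$-modules are precisely the inflations of the simple $kE$-modules, i.e. the characters $\hat E=\Hom(E,k^\times)$; hence the hypotheses of Proposition~\ref{carlsonrouquier} are met and yield $n\in\ZZ$ and $\sigma\in\Sym(\hat E)$ with $\Omega^{-n}F(V)\simeq\sigma(V)$ for all $V\in\hat E$. Replacing $M$ by $N:=\Omega^{-n}_{\cO(H\times H^0)}\cO H\otimes_{\cO H}M$, chosen without projective bimodule summands, reduces the problem to showing $[N]\in\Pic(\cO H)$, where now $\bar N$ induces $\Omega^{-n}F$ and therefore sends each simple module $V$ to one stably isomorphic to the simple module $\sigma(V)$.

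Here I would pass to the summand-free part $\bar N_0$ of $\bar N$, which induces the same stable functor, and use the standard fact that a stable equivalence of Morita type given by a summand-free bimodule preserves indecomposable non-projective modules; then $\bar N_0\otimes_{kH}V$ is indecomposable non-projective and, being stably isomorphic to the simple module $\sigma(V)$, is genuinely isomorphic to it. Thus $\bar N$ maps simples to simples, and since $kH$ is symmetric with no projective simple modules (again as $P\neq1$), Linckelmann's simple-preserving criterion \cite{linckelmann2018block} shows that $\bar N$ induces a Morita equivalence of $kH$. The main obstacle is then the lift from $k$ to $\cO$: writing $N\otimes_{\cO H}N^\vee\cong\cO H\oplus W$ with $W$ projective, one wants $\bar W=0$, which forces $W=0$ by $\cO$-freeness, so that $N$ itself induces a Morita equivalence and $[N]\in\Pic(\cO H)$, giving $[M]\in\Pic(\cO H)\cdot\langle\Omega_{\cO(H\times H^0)}\cO H\rangle$. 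The delicate point is to match summand-free representatives across reduction — concretely, to guarantee that the Morita equivalence over $k$ really is induced by $\bar N$ and not merely by a proper summand — which I would control through the lifting theory for stable equivalences over $\cO$ together with the lifting of idempotents along $\End_{\cO(H\times H^0)}(N)\to\End_{k(H\times H^0)}(\bar N)$.
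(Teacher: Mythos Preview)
Your approach is essentially the same as the paper's: apply Proposition~\ref{carlsonrouquier} to strip off a Heller power, then invoke Linckelmann's results (\cite[Propositions~2.4 and~2.5]{linckelmann1996stable}) to recognise the remaining summand-free bimodule as a genuine Morita self-equivalence. The only difference is in the passage between $\cO$ and $k$: the paper first observes that the kernel of the reduction map $\sPic(\cO H)\to\sPic(kH)$ lies in $\Pic(\cO H)$, which immediately reduces the whole statement to its $k$-analogue and avoids the ``delicate point'' you identify; your route of performing the Heller shift over $\cO$ and then controlling summand-free representatives via idempotent lifting also works, but is more laborious than necessary.
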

\begin{proof}
We obviously have a map $\sPic(\cO G)\rightarrow\sPic(kG)$, and the Kernel of this map is contained in $\Pic(\cO G)$, so, considering that Heller translates of $\cO H$ are naturally contained in $\sPic(\cO H)$, we just need to prove the result over $k$. We want to prove then that
\[
\underline{\textnormal{Pic}}(kH)=\textnormal{Pic}(kH)\cdot \langle \Omega_{k(H\times H^0)}kH\rangle.
\]
Consider $M$ an invertible $kH$-$kH$-bimodule and $M\otimes_{kH}-$ the stable equivalence of Morita type induced on $kH$. By Proposition \ref{carlsonrouquier} there is an $n$ such that if $M_1=\Omega^{-n}M$, then $M_1\otimes_{kH}-$ permutes the simple $kH$-modules in the stable category, i.e. up to projectives. But then by \cite[Proposition 2.4]{linckelmann1996stable} we can consider (uniquely) an indecomposable summand $M_2$ of $M_1$ that actually permutes the simple $kH$-modules in the module category. This means, by \cite[Proposition 2.5]{linckelmann1996stable}, that $M_2\in\text{Pic}(kH)$.
\end{proof}
\section{Picard groups}
For the next results we need some notions on finite groups of Lie type and their Sylow subgroups that are available in the literature. In particular, most of the information needed can be found in \cite{wilson2009finite}. We will also widely use notions on Suzuki groups, that are mainly available in the original paper by Suzuki, \cite{suzuki1962class}.
\begin{prop}
For the following groups
\begin{enumerate}[(i)]
\item $G={}^{2}B_{2}(q)$, $P\in\Syl_2(G)$, $q=2^m$, $m=2n+1$;
\item $G={}^{2}G_{2}(q)$, $P\in\Syl_3(G)$, $q=3^m$, $m=2n+1$;
\item $G=PSU_3(q)$, $P\in\Syl_p(G)$, $q=p^m$;
\end{enumerate} 
$H=N_G(P)$ satisfies the hypothesis of Proposition \ref{carlsonrouquier}.
\end{prop}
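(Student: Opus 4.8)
The plan is to check, family by family, the three hypotheses packaged in Corollary \ref{stablepic} (namely that $H=N_G(P)\simeq P\rtimes E$ with $P$ of non-periodic cohomology, $T(P)\simeq\mathbb{Z}$ generated by $\Omega(k)$, and $E$ a cyclic $p'$-group), using throughout that each $G$ is a group of Lie type of relative rank one in its defining characteristic. First I would pin down the structure of $H=N_G(P)$. In all three families $P$ is the unipotent radical of a Borel subgroup $B$ and is a full Sylow $p$-subgroup (the $p$-part of $|G|$ is exactly $q^2$ in (i) and $q^3$ in (ii), (iii)), so that $N_G(P)=B=P\rtimes T$ with $T$ a maximally split torus. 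The torus $T$ is cyclic of order $q-1$ in cases (i) and (ii), and of order $q^2-1$ (respectively $(q^2-1)/\gcd(3,q+1)$ after passing to the projective group) in case (iii); in every case it is a $p'$-group. This is standard structure theory of rank-one groups, available in \cite{wilson2009finite} and, for the Suzuki groups, readable from \cite{suzuki1962class}. Taking $E=T$ then yields $H=P\rtimes E$ with $E$ cyclic and coprime to $p$, which is condition (iii).

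For the non-periodic cohomology condition, recall that a $p$-group has periodic cohomology exactly when it is cyclic or generalized quaternion. In each family $P$ is non-abelian of order $q^2$ or $q^3$, hence not cyclic; moreover $Z(P)$ is elementary abelian of order $q$ (the derived subgroup in cases (i) and (ii), the trace-zero part of $\mathbb{F}_{q^2}$ in case (iii)), so when $p=2$ and $q>2$ it contains more than one involution, ruling out generalized quaternion. Thus $P$ is neither cyclic nor quaternion and has non-periodic cohomology, giving condition (i).

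The substantive step, and the one I expect to be the main obstacle, is to establish $T(P)\simeq\mathbb{Z}$ with generator $\Omega(k)$. Here I would invoke the Carlson--Th\'evenaz classification of endo-trivial modules (see \cite{mazza2019endotrivial}): for a $p$-group of $p$-rank at least two, the torsion-free rank of $T(P)$ equals the number of connected components (up to $P$-conjugacy) of the poset $\mathcal{E}_{\geq 2}(P)$ of elementary abelian subgroups of rank $\geq 2$. The key observation is that $Z(P)$ has $p$-rank $\geq 2$: in cases (i) and (ii) it is elementary abelian of rank $m\geq 3$, and in case (iii) of rank $m$, which is $\geq 2$ as soon as $q>p$. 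Fixing a central elementary abelian $A$ of rank two, any elementary abelian $E$ of rank $\geq 2$ gives $EA$ elementary abelian (since $A$ is central) and containing both $E$ and $A$; hence the poset is connected and the torsion-free rank is $1$. It then remains to verify that the torsion subgroup of $T(P)$ vanishes, so that $T(P)=\langle\Omega(k)\rangle\simeq\mathbb{Z}$; this is the delicate point, extracted from the same classification, and it is where the arguments for $p=2$ (the Suzuki and even unitary cases) and for odd $p$ genuinely diverge.

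Finally, I would flag the boundary case $q=p$ in (iii), where $P$ is extraspecial of order $p^3$: there $Z(P)$ has rank one, the poset $\mathcal{E}_{\geq 2}(P)$ splits into $p+1$ normal (hence non-fused) components, and $T(P)\simeq\mathbb{Z}^{p+1}$. This case therefore falls outside the hypothesis and must be excluded or treated separately; granting $q>p$ in (iii) (and noting $m\geq 3$ is automatic in (i) and (ii)), all three conditions hold and $H$ satisfies the hypothesis of Proposition \ref{carlsonrouquier}.
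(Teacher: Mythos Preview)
Your plan covers the same three verifications as the paper, but for the crucial hypothesis $T(P)\simeq\mathbb{Z}$ you take a different route. The paper disposes of it by citing three results tailored to each family: \cite[Corollary~1.3]{carlson2005classification} for the Suzuki $2$-groups (after noting that all involutions are central), \cite[Theorem~5.6]{carlson2006endotrivial} for ${}^2G_2(q)$, and \cite[Theorem~7.1]{carlson2020torsion} for $PSU_3(q)$. Your uniform argument via the Carlson--Th\'evenaz machinery---$Z(P)$ has $p$-rank at least $2$, so any rank-$\geq 2$ elementary abelian subgroup is connected to a fixed central one and $\mathcal{E}_{\geq 2}(P)$ is connected, forcing torsion-free rank one---is more conceptual and buys a single explanation for all three families at once, whereas the paper's citations are quicker but opaque.

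The gap you flag yourself, the vanishing of the torsion part $TT(P)$, is real but easily closed. For odd $p$ (case (ii), and case (iii) with $p$ odd) one has $TT(P)=0$ for every $p$-group by the Carlson--Th\'evenaz classification, so nothing further is needed. For $p=2$ (case (i), and case (iii) with $q$ even) you should check that every involution of $P$ is central: this is classical for the Suzuki $2$-groups, and for the unitary unipotent radical the identity $g(a,c)^2=g(0,a\bar a)$ forces $a=0$; then \cite[Corollary~1.3]{carlson2005classification} yields $T(P)\simeq\mathbb{Z}$ outright. That last step is exactly what the paper invokes in case (i), so your route and the paper's converge there.

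Your boundary observation about $q=p$ in (iii) is correct and is not addressed in the paper: for $m=1$ the centre has $p$-rank one, $P$ is extraspecial of order $p^3$ (quaternion when $p=2$, of exponent $p$ when $p$ is odd), and $T(P)$ is not infinite cyclic, so the hypothesis genuinely fails and that case must be excluded.
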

\begin{proof}
(i) $P$ is a non-abelian $2$-group of order $2^{2(2n+1)}$, and is usually called in the literature a Suzuki $2$-group of type A. Detailed information on these groups can be found in \cite{higman1963suzuki} and obviously \cite{suzuki1962class}, but all we need to know is that $P$ is a special group with
\[
Z(P)=\Phi(P)=P'\simeq C_2^{2n+1},
\]
and all involutions of $P$ are central, so we conclude by \cite[Corollary 1.3]{carlson2005classification} that the group of endo-trivial $kP$-modules is $T(P)\simeq \mathbb{Z}$, generated by $\Omega^1(k)$. For what concerns $H:=N_G(P)$, we have that $H\simeq P\rtimes C_{2^{2n+1}-1}=P\rtimes \langle \xi\rangle$. 
\newline
(ii) In this case $H=P\rtimes E$, where $E$ is cyclic of order $q-1$ and the condition on endo-trivial modules follows from \cite[Theorem 5.6]{carlson2006endotrivial}.
\newline
(iii) This time $H=P\rtimes E$, where $E$ is a cyclic group of order $q^2-1$ and the statement on endo-trivial modules follows from \cite[Theorem 7.1]{carlson2020torsion}.
\end{proof}
\begin{rem}
We didn't need to explicitly check the hypothesis on the periodicity of the cohomology for $P$ because it immediately follows from \cite[Proposition 9.3]{brown2012cohomology}.
\end{rem}
We are now ready to compute the Picard groups. We start calculating $\Pic(B)$ for some blocks of finite simple groups, and then it will be proved in the main Theorem that these are essentially the only calculations we need to do for describing the Picard groups for principal blocks with TI defect groups and cyclic inertial quotient. 

The proof is essentially the same for all the simple groups considered, so we are going to put more detail in the first of these proof and the others will just be an adaptation of the same argument.
\begin{thm}\label{picardsuzuki}
Let $q:=2^m$, with $m$ an odd number greater than $2$, $G={}^{2}B_{2}(q)$, $H:=N_G(P)$, where $P$ is a Sylow $2$-subgroup of $G$, and $B$ be the principal block of $\cO G$. Then
\begin{itemize}
\item $\Pic(B)=\cT(B)\simeq C_m$
\item $\Pic(\cO H)=\cT(\cO H)\simeq C_{q-1}\rtimes C_{m}$
\end{itemize}
\end{thm}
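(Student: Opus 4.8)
The two computations are tied together by the stable equivalence of Morita type $\Psi\colon\sMod(B)\xrightarrow{\sim}\sMod(\cO H)$ discussed above, so the plan is to settle the (easier) normal-defect group $H$ first and then to transport the result to $B$ through $\Psi$ together with Corollary~\ref{stablepic}. Throughout I write $H=P\rtimes E$ with $P=O_2(H)$ the normal Sylow $2$-subgroup and $E\cong C_{q-1}$, and I use that the field (Frobenius) automorphisms give $\Out(G)\cong C_m$, while $G$ being simple forces $\Hom(G,\cO^\times)=1$.

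\emph{Step 1: the group $H$.} Since $P\trianglelefteq H$, the principal block $\cO H$ has normal defect group $P$ and cyclic inertial quotient $E$, so I would read off $\Pic(\cO H)$ from the description of Picard groups of blocks with normal defect group in \cite{livesey2021picard} and \cite{lima20}, obtaining $\Pic(\cO H)=\cE(\cO H)$. The equality $\cE(\cO H)=\cT(\cO H)$ then follows because $T(P)\simeq\ZZ$ is generated by Heller translates (established for the Suzuki $2$-group in the preceding Proposition), so the Dade-group torsion that could produce genuinely non-trivial sources vanishes. It remains to identify $\cT(\cO H)$ with its two visible families of trivial-source auto-equivalences: the linear characters $\Hom(H,\cO^\times)\cong\Hom(E,\cO^\times)\cong C_{q-1}$ — here $H^{\mathrm{ab}}\cong E$ because $E$ acts fixed-point-freely on $P/\Phi(P)\cong\mathbb{F}_q$ — and the field automorphisms forming a copy of $C_m$ that acts on $\mathbb{F}_q^\times\cong C_{q-1}$ by Frobenius. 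This gives $\Pic(\cO H)=\cT(\cO H)\cong C_{q-1}\rtimes C_m$.

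\emph{Step 2: passing to the stable group and to $B$.} By the preceding Proposition the hypotheses of Proposition~\ref{carlsonrouquier} hold, so Corollary~\ref{stablepic} yields $\sPic(\cO H)=\Pic(\cO H)\cdot\langle\Omega_{\cO(H\times H^0)}\cO H\rangle$. As $P$ has non-periodic cohomology the central subgroup $\langle\Omega\rangle$ is infinite and meets $\Pic(\cO H)$ trivially, so $\sPic(\cO H)\cong(C_{q-1}\rtimes C_m)\times\ZZ$. The equivalence $\Psi$ gives $\sPic(B)\cong\sPic(\cO H)$, and the natural map $\Pic(B)\to\sPic(B)$ is injective, since an invertible bimodule is indecomposable and non-projective, hence determined by its image in the stable category. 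Because $\Pic(B)$ is finite, its image is a torsion subgroup of $(C_{q-1}\rtimes C_m)\times\ZZ$, hence lands inside the torsion part $C_{q-1}\rtimes C_m$; in particular the Heller component is automatically $0$. Combined with the honest bimodules ${}_\phi B$ attached to the field automorphisms — which correspond under $\Psi$ to the ${}_\phi\cO H$ and realise a subgroup $C_m$ of $\Pic(B)$ — the problem reduces to showing that the image of $\Pic(B)$ meets the normal subgroup $\Hom(H,\cO^\times)\cong C_{q-1}$ trivially.

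\emph{Step 3 (the main obstacle).} Suppose $M\in\Pic(B)$ maps under $\Psi$ to a pure linear character $\chi\in\Hom(H,\cO^\times)$. Then $M\otimes_B k_G$ is a simple $B$-module $S$ whose Green correspondent is the $1$-dimensional module $\cO_\chi$, so $S$ has trivial source and vertex $P$. By Green correspondence the trivial-source indecomposables of $\cO H$ with vertex $P$ are exactly the $\cO_\chi$ with $\chi\in\Hom(H/P,\cO^\times)$, and the required contradiction is that the trivial module is the \emph{only} simple trivial-source $B$-module with full vertex $P$, forcing $\chi=1$. Establishing this last uniqueness — equivalently, that the Green correspondent $f(\cO_\chi)$ fails to be simple for $\chi\neq 1$ — is where the specific structure of the principal $2$-block of ${}^2B_2(q)$ must be used, and I expect it to be the hard point of the argument; morally it reflects the fact that the coincidence of fusion systems forced by the T.I. hypothesis lets $B$ see only those local auto-equivalences that descend from the perfect group $G$, so the $C_{q-1}$ of linear characters of $H$ cannot lift. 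Granting it, the image of $\Pic(B)$ is exactly the field-automorphism $C_m$, and $\Pic(B)=\cT(B)\cong C_m$ follows.
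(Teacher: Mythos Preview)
Your overall plan matches the paper's, but the obstacle you single out in Step~3 is closed by the paper far more cheaply than the Green-correspondent reformulation you propose. Every $M\in\Pic(B)$ permutes the simple $B$-modules, hence permutes the irreducible Brauer characters and preserves the subset of those that lift to an ordinary irreducible character. A glance at the character degrees of ${}^2B_2(q)$ shows that the trivial ordinary character is the \emph{only} one whose reduction is an irreducible Brauer character, so every $M$ fixes $k_G$. Since $\Psi$ sends $k_G$ to $k_H$, the image of $\Pic(B)$ in $\Pic(\cO H)$ fixes $k_H$; but tensoring with a nontrivial $\chi\in\Hom(E,\cO^\times)$ sends $k_H$ to $\chi\neq k_H$. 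Hence the image meets the $C_{q-1}$ trivially, with no need to decide whether the Green correspondents $f(\cO_\chi)$ are simple. The paper phrases this as $\Out_P(A)=1$ in the exact sequence of \cite{boltje2020picard}, and then bounds the fusion part by an explicit computation of $\Out(P,\cF)=N_{\Aut(P)}(E)/E\cong C_m$ using Lewis's description of $\Aut(P)$ in \cite{lewis2014bounding} together with resistance of $P$.

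There is also a wrong turn in Step~1. Your inference $\cE(\cO H)=\cT(\cO H)$ from $T(P)\cong\ZZ$ conflates endo\emph{trivial} with endo\emph{permutation}: sources of bimodules in $\cE$ live in the Dade group $D(P)$, not in $T(P)$, and torsion-freeness of the latter does not by itself kill the relevant $\cF$-compatible endopermutation sources. The paper avoids this entirely: since the ordinary irreducible characters of $H$ with $P$ in their kernel are exactly the linear characters, which in turn are exactly the lifts of the irreducible Brauer characters of $H$, the criterion \cite[Corollary~4.5]{eaton2020some} yields $\Pic(\cO H)=\cT(\cO H)$ directly, without passing through $\cE$ at all. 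The same computation of $N_{\Aut(P)}(E)/E$ then supplies the upper bound on $\cT(\cO H)$ that your ``two visible families'' description needs but does not justify.
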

\begin{proof}
We claim that $\sPic(\cO H)=\underline{\cE}(\cO H)$, where
\[
\underline{\cE}(\cO H)=\left\lbrace M\in \sPic(\cO H)\vert\, M\textnormal{ has endo-permutation source}\right\rbrace
\]
First we prove that $\Pic(\cO H)=\cT(\cO H)$, i.e. all auto-Morita equivalences of $\cO H$ are given by trivial source bimodules. Recall that 
\[
H\simeq P\rtimes C_{2^{q}-1}=:P\rtimes E,
\]
and obviously 
\[
\left\lbrace\chi\in \textnormal{Irr}(H)\vert P\leq \Ker(\chi)\right\rbrace=\left\lbrace \chi\in \textnormal{Irr}(H)\vert\chi\textnormal{ is a lift of } \psi\in \textnormal{IBr}(H)\right\rbrace.
\]
So, by \cite[Corollary 4.5]{eaton2020some}, it follows that $\Pic(\cO H)=\cT(\cO H)$.

Now by Corollary \ref{stablepic} we have that elements of $\sPic(\cO H)$ are either bimodules inducing a self-Morita equivalence of $\cO H$ or Heller translate of those. Since Heller translates of bimodules with trivial source have endo-trivial source, we can immediately conclude that $\sPic(\cO H)=\underline{\cE}(\cO H)$. 

We have already seen that the Sylow $2$-subgroups of $G$ are TI-groups, thus induction and restriction give a stable equivalence of Morita type. In particular there is a $\cO H$-$B$-bimodule $M$, with trivial source and diagonal vertex $\Delta(P)$, such that
\[
\Psi:M\otimes_B -\otimes_B M^{*}:\sPic(B)\rightarrow\sPic(\cO H)
\]
is a group homomorphism. We know that $H$ has non-periodic cohomology, thus $\langle \Omega_{\cO(H\times H^0)}\cO H\rangle$ is an infinite central cyclic subgroup of $\sPic(\mathcal{O}H)$. But then all finite subgroups of $\sPic(\mathcal{O}H)$ are contained in $\Pic(\cO H)$, and in particular this holds for the image of the map $\Psi$ as well. Thus we have an injective map $\Pic(B)\hookrightarrow \Pic(\cO H)$, that maps $\Out_P(A)$ to $\Hom(E,k^{\times})$. 

We already observed that $\textnormal{Pic}(\mathcal{O}H)=\mathcal{T}(\mathcal{O}H)$, and that $\Psi$ is induced by tensoring with trivial source bimodules, so $\textnormal{Pic}(B)=\mathcal{T}(B)$.

Using the notation introduced in \cite{boltje2020picard} we have an exact sequence
\[
\begin{tikzcd}
1 \arrow[r] & \textnormal{Out}_{P}(A) \arrow[r]           & \mathcal{T}(B) \arrow[r] & {\textnormal{Out}(P,\mathcal{F})} 
\end{tikzcd}
\]
Moreover, since the trivial character is the only lift of a Brauer character(just look at the character degrees), we must have that $\textnormal{Out}_P(A)=1$, otherwise a non-trivial element of $\textnormal{Out}_P(A)=1$ would correspond to a non-trivial element of $\Hom(E,k^{\times})$ fixing the character $1_H$, but multiplication by a non-trivial linear character does not fix the trivial character.

All we need to understand is then the "fusion part" of $\mathcal{T}(B)$. By \cite[Corollary 7.11]{craven2011theory}, we have that Sylow $2$-subgroups of $G$ are resistant $2$-groups, so 
\[
\textnormal{Out}(P,\mathcal{F})\simeq N_{Aut(P)}(E)/E.
\]
Now we need to study the structure of $\textnormal{Aut}(P)$. We follow the notation in \cite{lewis2014bounding}. Identify $P$ with the group $A(m,\Theta)$, where $\Theta$ is a non-trivial automorphism of $\mathbb{F}:=GF(q)$ and elements of $A(m,\Theta)$ are given by pairs $(a,b)$, $a,b\in\mathbb{F}$ and multiplication is defined by
\[
(a,c)(b,d)=(a+b,c+d+b\Theta(a)).
\]
In particular the center of $P$ consists of elements of the form $(0,b)$, $b\in\mathbb{F}$. We can then define three subgroups of $\Aut(P)$ given by
\begin{itemize}
\item $A_1=\left\lbrace \varphi_{\psi}\vert\psi\textnormal{ is a linear transformation of }\mathbb{F}\textnormal{ as a vector space}\right\rbrace$, where $(a,b)^{\varphi_{\psi}}=(a,\psi(a)+b)$;
\item $A_2=\left\lbrace \varphi_{x}\vert x\in\mathbb{F}^{\times}\right\rbrace=E$, where $(a,b)^{\varphi_{x}}=(xa,x\Theta(x)b)$;
\item $A_3=\left\lbrace \phi_{\sigma}\vert\sigma\in \mathcal{G}al(\mathbb{F}\vert \mathbb{F}_2)\right\rbrace$, where $(a,b)^{\phi_{\sigma}}=(a^{\sigma},b^{\sigma})$.
\end{itemize}
Lewis in \cite[Theorem 6.5]{lewis2014bounding} proves that $\textnormal{Aut}(P)=A_1A_2A_3$ and he already points out that $E$ is normalized by $A_3$, so we just need to check whether any element of $A_1$ normalizes $E$ or not. But
\[
\varphi^{\varphi_{\psi}}_x(a,b)=(xa,x\psi(a)+xb-\psi(xa)),
\]
thus no (non-trivial) element of $A_1$ normalizes $A_2$, so we can immediately conclude that $N_{\textnormal{Aut}(P)}(E)=E\rtimes A_3$, in particular 
\[
\mathcal{T}(B)\lesssim A_3\simeq C_{m}
\]
$A_3$ is however given by the restriction of a field automorphism of $G$ to its Sylow $2$-subgroup, thus, since $B$ is $\Out(G)$-stable, we can define, for each $\varphi\in A_3$, an automorphism $\hat{\varphi}$ of $G$ such that ${}_{\varphi}B$ is a non-trivial element of $\textnormal{Pic}(B)$. Then we conclude that
\[
\textnormal{Pic}(B)=\mathcal{T}(B)\simeq C_m, \textnormal{ and } \textnormal{Pic}(\mathcal{O}H)=\mathcal{T}(\mathcal{O}H)\simeq C_{q-1}\rtimes C_{m}
\]
\end{proof}
\begin{thm}\label{simplecases}
Let $G$ be one of the following groups:
\begin{enumerate}[(i)]
\item ${}^{2}G_{2}(q)$, $q=p^m$, $p=3$, $m$ odd;
\item $PSU_3(q)$, $q=p^m$;
\item $PSL_2(q)$, $q=p^m$, $p$ odd.
\end{enumerate} 
Take $H=N_G(P)$, where $P$ is a Sylow $p$-subgroup of $G$, and $B$ the principal $p$-block of $G$. Then, respectively,
\begin{enumerate}[(i)]
\item $\Pic(B)=\cT(B)\simeq C_m$, $\Pic(\cO H)\simeq C_{q-1}\rtimes C_{m}$;
\item $\Pic(B)=\cT(B)\simeq C_{(3,q+1)}\rtimes C_{2m}$, $\Pic(\cO H)\simeq C_{q^2-1}\rtimes\left(C_{(3,q+1)}\rtimes C_{2m}\right)$;
\item $\Pic(B)=\cT(B)\simeq C_{(2,q-1)}\rtimes C_{m}$, $\Pic(\cO H)\simeq C_{q-1}\rtimes\left(C_{(2,q-1)}\rtimes C_{m}\right)$.
\end{enumerate}
\end{thm}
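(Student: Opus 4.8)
The plan is to run, for each of the three families, exactly the argument given for ${}^{2}B_{2}(q)$ in Theorem \ref{picardsuzuki}; the only genuinely new work is the group-theoretic input attached to each Sylow subgroup $P$ and its automorphism group. First I would check that $H=N_G(P)$ satisfies the hypotheses of Corollary \ref{stablepic}. For (i) and (ii) this is precisely the content of the earlier Proposition, so nothing new is required there. For (iii), where $P$ is elementary abelian of rank $m$, I would note that $P$ has non-periodic cohomology as soon as $m\geq 2$ (the case $m=1$ being cyclic defect, already covered by classical theory and consistent with the stated formula), that $T(P)\simeq\mathbb{Z}$ by the Carlson--Th\'evenaz classification of endotrivial modules for elementary abelian $p$-groups of rank $\geq 2$ with $p$ odd (see \cite{mazza2019endotrivial}), and that the inertial quotient $E=N_G(P)/C_G(P)$ is cyclic, being a subgroup of $\mathbb{F}_q^{\times}$. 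Once these hold, Corollary \ref{stablepic} applies unchanged.

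The formal skeleton of the Suzuki proof then transfers word for word. One shows $\Pic(\cO H)=\cT(\cO H)$ via \cite[Corollary 4.5]{eaton2020some}, whose hypothesis holds because $P\trianglelefteq H$ forces every simple $kH$-module to be inflated from $E$, so that the irreducible characters of $H$ with $P$ in their kernel are exactly the lifts of Brauer characters. Corollary \ref{stablepic}, together with the fact that Heller translates of trivial-source bimodules have endo-trivial source, gives $\sPic(\cO H)=\underline{\cE}(\cO H)$. The TI property produces a trivial-source stable equivalence of diagonal vertex $\Delta(P)$, hence a homomorphism $\Psi:\sPic(B)\to\sPic(\cO H)$; since $H$ has non-periodic cohomology the Heller subgroup is infinite central cyclic, so every finite subgroup of $\sPic(\cO H)$, in particular $\mathrm{im}(\Psi)$, lies in $\Pic(\cO H)=\cT(\cO H)$. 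This yields an injection $\Pic(B)\hookrightarrow\Pic(\cO H)$ with $\Pic(B)=\cT(B)$.

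All the case-by-case labour then sits in evaluating the two ends of the Boltje--Kessar--Linckelmann sequence $1\to\Out_P(A)\to\cT(B)\to\Out(P,\cF)$. As $P$ is abelian in case (iii) (Burnside's fusion theorem) and resistant in cases (i), (ii) (as for the Suzuki groups, via \cite{craven2011theory}), one has $\cF=\cF_P(H)$ and $\Out(P,\cF)\simeq N_{\Aut(P)}(E)/E$. I would therefore (a) compute $\Aut(P)$ --- for the Ree groups from the known Sylow structure, with a computation paralleling \cite{lewis2014bounding}; for $PSU_3(q)$ from the structure of the unipotent radical; and for $PSL_2(q)$ simply as $\Aut(P)=\GL_m(p)$ with $E$ a subgroup of a Singer cycle; (b) identify the quotient $N_{\Aut(P)}(E)/E$, which in each case splits into a field-automorphism part and a small diagonal part, producing the claimed $C_m$, $C_{(3,q+1)}\rtimes C_{2m}$ and $C_{(2,q-1)}\rtimes C_m$; and (c) determine $\Out_P(A)$ from the character degrees, exactly as the "trivial character is the only lift" argument forced $\Out_P(A)=1$ for the Suzuki groups. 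Finally, as in Theorem \ref{picardsuzuki}, I would realize each fusion, graph or diagonal automorphism by an actual automorphism of $G$, using that $B$ is $\Out(G)$-stable, to upgrade the injection into an isomorphism $\Pic(B)=\cT(B)$ of the stated shape, and then read off $\Pic(\cO H)=\cT(\cO H)$ as the extension of this fusion datum by $\Hom(E,k^{\times})\simeq E$.

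The hard part will be the group-theoretic bookkeeping in (a)--(b): pinning down $\Aut(P)$ and the normalizer quotient $N_{\Aut(P)}(E)/E$ for the non-abelian $P$ of the Ree and unitary groups, and --- most delicately --- correctly locating the diagonal factors $C_{(3,q+1)}$ and $C_{(2,q-1)}$, deciding for each whether it is a genuine fusion automorphism or belongs to $\Out_P(A)$, and verifying that each is realized by an automorphism of $G$ rather than being spurious. Getting these small factors and the resulting semidirect-product structure exactly right, and reconciling them with the order of $E$ in each case, is where the real care is needed.
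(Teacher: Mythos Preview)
Your plan is sound and would succeed, but it diverges from the paper's proof in two places that are worth knowing about, because they save exactly the ``hard part'' you flag at the end.

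For the fusion computation in cases (i) and (ii) you propose to imitate the Suzuki argument literally: compute $\Aut(P)$ by hand, locate $E$ inside it, and determine $N_{\Aut(P)}(E)/E$. The paper does not do this. Instead, once $\Out_P(A)=1$ is established, it invokes \cite[Theorem~A]{broto2019automorphisms}, which says directly that every automorphism of the fusion system $\cF$ of a finite simple group of Lie type (in the defining characteristic) is induced by an automorphism of the group $G$ itself. This collapses the whole problem to reading off $\Out(G)$, which is classical: cyclic of order $m$ for ${}^2G_2(q)$, and $C_{(3,q+1)}\rtimes C_{2m}$ (diagonal and field/graph) for $PSU_3(q)$. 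Realization by genuine automorphisms of $G$ is then automatic, and no structural analysis of $\Aut(P)$ is needed. Your route would reach the same answer but at considerably greater cost, and the computation of $\Aut(P)$ for the Ree Sylow in particular is not in the literature in a clean form comparable to \cite{lewis2014bounding}.

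For case (iii) there is also a simplification you miss: since $P$ is elementary abelian and $E\simeq C_{q-1}$ acts \emph{freely} on $P\setminus\{1\}$ (the action is by scalar multiplication in $\mathbb{F}_q$), the original Carlson--Rouquier lemma \cite[Lemma~3.1]{carlson2000self} applies directly, and the paper simply refers to the parallel computation already carried out in \cite[Proposition~5.3]{eaton2020some}. You do not need the endotrivial-module hypothesis or Corollary~\ref{stablepic} here at all, though using it as you do is of course not wrong.
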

\begin{proof}
(i) As before, we can prove that $\Pic(\cO H)=\cT(\cO H)$ just by looking at characters. In fact, the only characters with $P$ in their kernel are the linear characters of $H$ and thus the only lifts of Brauer characters of $H$. Then, by Corollary \ref{stablepic} again, $\sPic(\cO H)=\underline{\cE}(\cO H)$ and $\Pic(B)$ injects in $\Pic(\cO H)$, yielding $\Pic(B)=\cT(B)$.

We have again that $\Out_P(A)$ is trivial, so we just need to understand the fusion part. However, by \cite[Theorem A]{broto2019automorphisms}, these are just induced by an outer automorphism of the group $G$. Since $\Out(G)$ is cyclic, generated by the field automorphism, and $B$ is $\Out(G)$-stable, every field automorphism induces a non-trivial auto-Morita equivalence, we have that $\Pic(B)=\cT(B)\simeq C_q$.

(ii)The proof goes exactly like before.

(iii)This case is more immediate, since the Sylow $p$-subgroup of $G$ is abelian, $N_G(P)\simeq P\rtimes C_{q-1}$, and $C_{q-1}$ acts freely on $P$, thus the original result by Carlson-Rouquier can be applied. The proof is analogous to the one of \cite[Proposition 5.3]{eaton2020some}.
\end{proof}

\begin{thm}
If $B$ is a principal block with (non-normal) TI defect groups, and cyclic inertial quotient then $\Pic(B)=\cE(B)$.
\end{thm}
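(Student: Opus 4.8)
The plan is to reduce the statement to the explicit computations of Theorem~\ref{picardsuzuki} and Theorem~\ref{simplecases}, exploiting that both establish the stronger equality $\Pic(B)=\cT(B)$. Since a trivial source bimodule has in particular endo-permutation source, one always has $\cT(B)\subseteq\cE(B)\subseteq\Pic(B)$; hence whenever $\Pic(B)=\cT(B)$ holds, all three groups coincide and in particular $\Pic(B)=\cE(B)$. Thus the whole burden of the proof is to show that every principal block $B$ of the prescribed type is Morita equivalent to one of the principal blocks already treated, so that the equality $\Pic(B)=\cT(B)$ is inherited.

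First I would normalise the group. Because the principal block of $\cO G$ is isomorphic, via inflation, to the principal block of $\cO[G/O_{p'}(G)]$, and both $\Pic$ and $\cE$ are Morita invariants, I may assume $O_{p'}(G)=1$. The hypothesis that $P$ is a non-normal TI Sylow $p$-subgroup then forces $O_p(G)=1$ as well, since a non-trivial $O_p(G)$ would be contained in every Sylow $p$-subgroup, contradicting the existence of a $g$ with $P\cap P^g=1$. Hence $F(G)=1$, so $F^*(G)=E(G)$ is a central product of components and $N_G(P)$ is strongly $p$-embedded.

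Next I would invoke the classification. Appealing to the classification of blocks with TI defect groups in \cite{an2004blocks} and the explicit description of the principal-block case in \cite{blau1990modular}, and imposing that the inertial quotient be cyclic, the admissible groups reduce, after the normalisation above, to those built from the simple groups ${}^2B_2(q)$, ${}^2G_2(q)$, $PSU_3(q)$ and $PSL_2(q)$. For each of these the normaliser has the shape $H=N_G(P)=P\rtimes E$ with $E$ cyclic: when $P$ is non-abelian the hypotheses of Proposition~\ref{carlsonrouquier} hold for the relevant $P$, so Corollary~\ref{stablepic} applies, whereas in the $PSL_2$ case $P$ is abelian and $E$ acts freely, so the original Carlson--Rouquier result suffices. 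In either situation the argument of Theorem~\ref{picardsuzuki}, via the stable equivalence $\Psi\colon\sPic(B)\to\sPic(\cO H)$ together with the fact that the finite subgroups of $\sPic(\cO H)$ lie in $\Pic(\cO H)$, yields $\Pic(B)=\cT(B)$.

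The main obstacle is the classification step together with the possible presence of almost-simple overgroups: I must check that every group meeting the hypotheses contributes a principal block Morita equivalent to one arising from the four families, and that the relevant fusion automorphisms (diagonal and field automorphisms) are realised by trivial source bimodules, so that the identity $\Pic(B)=\cT(B)$ is preserved under passing from the simple group to $G$. Once this matching is complete, the termwise equalities $\Pic(B)=\cT(B)=\cE(B)$ assemble to the stated result.
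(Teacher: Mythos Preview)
Your outline follows the same route as the paper: normalise $O_{p'}(G)=1$, reduce to an almost-simple group via the classification in \cite{blau1990modular}, and then appeal to Theorems~\ref{picardsuzuki} and~\ref{simplecases}. Two points need attention before the reduction goes through.

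First, your claim that ``the admissible groups reduce \ldots{} to those built from the simple groups ${}^2B_2(q)$, ${}^2G_2(q)$, $PSU_3(q)$ and $PSL_2(q)$'' is not true as stated. Principal blocks with cyclic or generalised quaternion defect group also have TI defect and cyclic inertial quotient, yet need not arise from these four families; indeed they fail the hypotheses of Proposition~\ref{carlsonrouquier} and Corollary~\ref{stablepic} because $P$ has periodic cohomology. The paper disposes of these cases at the outset: cyclic defect by \cite[Theorem~2.7]{linckelmann1996isomorphism}, and generalised quaternion by showing (via \cite[Lemma~1.1]{blau1990modular}) that either $B$ is nilpotent or $P$ is normal. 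Only after this, using that $C_p\times C_p\leq P$ forces a non-abelian minimal normal subgroup, does the reduction to the four families become valid.

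Second, the ``main obstacle'' you flag is exactly where the paper expends most of its effort, and it is not only a matter of checking that trivial source bimodules realise the relevant automorphisms. One must show that for $S\in\{{}^2B_2(q),{}^2G_2(q)\}$ no proper almost-simple overgroup has cyclic inertial quotient (the paper does this by analysing how field automorphisms interact with the torus action on root subgroups), whereas for $S\in\{PSL_2(q),PSU_3(q)\}$ only diagonal extensions survive, giving $PGL_2(q)$ and $PGU_3(q)$ as the additional cases requiring the same $\Pic(B)=\cT(B)$ argument. Without this analysis you do not know which $G$ actually occur, so you cannot conclude.
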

\begin{proof}
Let $B$ be the principal block of $G$. If $P$ has cyclic defect group then, by \cite[Theorem 2.7]{linckelmann1996isomorphism}, $\Pic(B)=\cE(B)$.

Suppose that $P$ is a generalized quaternion group. Then, by \cite[Lemma 1.1]{blau1990modular}, $B$ is the principal block of a $p$-nilpotent group for $\vert P\vert\geq 16$ and then $B$ is basic Morita equivalent to $\cO P$. Since $\Pic(\cO P)=\cL(P)$ by \cite{roggenkamp1987isomorphisms}, we have just to deal with $P\simeq Q_8$, $G/O_{2'}(G)\simeq SL(2,3)$. However, in this last case, $B$ is solvable and, by \cite[Lemma 5.9]{eaton2020donovan}, $G$ has $2$-length one. But then $P$ is normal in $G$, and we are not concerned with this case.

We can now assume that $C_p\times C_p\leq P$, ruling out the previous cases. By \cite[Theorem 6.2.4]{gorenstein2007finite}, we have that $O_{p'}(G)\leq N_{G}(P)$, thus $G/O_{p'}$ has trivial intersection Sylow $p$-subgroups and, since $B$ is the group ring $\cO G/O_{p'}(G)$, we can assume $O_{p'}(G)=1$. 
\newline
Take now $M$ a minimal normal subgroup of $G$. Obviously $M\simeq S_1\times...\times S_n$, where $S_i$ are simple groups, and we can assume, up to reordering, that $S_1$ is not abelian, otherwise $G$ would be solvable and, by \cite[Lemma 1.1]{blau1990modular}, we would be again in the case $P$ cyclic or generalized quaternion. Since $P\cap M$ is a trivial intersection Sylow $p$-subgroup of $M$ and there is $s_1\in S_1$ such that $(P\cap M)^{s_1}\neq P\cap M$, we must have that $M$ is a simple group, and then $S\leq G\leq \Aut(S)$. 

Since we are restricting our attention to blocks with cyclic inertial quotient, $S$ must be isomorphic to one of the following groups:
\begin{enumerate}[(i)]
\item $PSL_2(q)$
\item $PSU_3(q)$
\item ${}^2B_2(q)$
\item ${}^2G_2(q)$
\end{enumerate}
If $G$ is not simple and $S$ is a Suzuki group or a Ree group, then the inertial quotient of the principal block is not cyclic. For Suzuki groups, this immediately follows from the description of $\Aut(P)$, where $P$ is a Sylow $p$-subgroup of $S$, in Proposition \ref{picardsuzuki}, but this holds in more generality when there are no outer diagonal or graph automorphisms. Recall that elements of $P$ have the form $x_{r_1}(u_1)...x_{r_n}(u_n)$, where $u_i$ are elements of the field, and $x_i$ roots. Then, since field automorphisms normalize $P$, every automorphism normalizing $P$ is generated by an element in the Borel subgroup of $S$ and a field automorphism. Now non-central elements don't normalize all root subgroups simultaneously, and the action of elements in the maximal torus $T$ on $P$ is given by
\[
{}^tx_r(u_r)=x_r(\theta_r(t)u_r)\text{ for $\theta_r\in X(\overline{T})$}.
\]
Since for a field automorphism $\sigma$, $u_r^{\sigma}\neq \theta_r(t)$, we conclude that $C_{\Aut(G)}(P)=Z(P)$, and, in particular, extensions by field automorphisms modify the inertial quotient when $S$ is ${}^2B_2(q)$ or ${}^2G_2(q)$. We still have to prove that we don't end up with another cyclic inertial quotient, but this can be readily checked. Suppose that for every $t\in T$, $(\theta_r(t)u_r)^{\sigma}=u_r^{\sigma}\theta_r(t)$, for all $u_r$. Then
\[
(u_r\theta_r(t))^{\sigma}=u_r^{\sigma}(\theta_r(t))^{\sigma}=u_r^{\sigma}\theta_r(t),
\]
so $\theta_r(t)$ must be in $\text{Fix}(\sigma)$ for all $t$, but this can't happen, unless one of $\sigma$ or $\theta_r$ is trivial.
Thus, for such $S$, $G=S$, and we conclude by Theorems \ref{picardsuzuki} and \ref{simplecases}.

If $S\in\left\lbrace PSL_2(q),PSU_3(q)\right\rbrace$, then as before field automorphisms won't yield a cyclic inertial quotient, but we have to deal with non-inner diagonal automorphisms. Let $S=PSL_2(q)$, then it is well known that outer diagonal automorphisms are given by conjugation with elements in $PGL_2(q)/PSL_2(q)$, so we can have $G=S$ or $G=PGL_2(q)$. In both these cases $\Pic(B)=\cT(B)$ by Theorem $\ref{simplecases}$ or an identical argument for $PGL_2(q)$. For $S=PSU_3(q)$, there exist outer diagonal automorphisms when $3$ divides $q+1$. In these cases we have to consider $PGU_3(q)$ as well, but analogously it holds $\Pic(B)=\cT(B)\simeq \Out(G)$.
\end{proof}
\begin{ack*}
I thank Charles Eaton for many useful discussions. Part of this work was done while I was visiting University of Florence, I therefore thank Silvio Dolfi and Eugenio Giannelli for hosting me.
\end{ack*}

\end{document}